\documentclass[a4paper, 10pt]{article} \usepackage[utf8]{inputenc} \usepackage{mathtools} \usepackage[T2A]{fontenc} \usepackage{hyperref} \usepackage{dsfont} \usepackage[T2A]{fontenc} \usepackage{amsmath,amssymb,amsthm} \usepackage[a4paper, lmargin = 3cm, rmargin = 3cm, tmargin = 3cm, bmargin = 2cm]
{geometry} \usepackage[russian, english]{babel} 
\usepackage{xcolor}
\usepackage{tikz-cd} 
\usepackage{cleveref}
\usepackage{multicol}
\usepackage{lipsum}
\usepackage{mwe}
\usepackage{subcaption}
\usepackage[colorinlistoftodos]{todonotes}
\crefformat{footnote}{#2\footnotemark[#1]#3}

\usepackage{relsize}

\hypersetup{
    colorlinks,
    linkcolor={blue},
    citecolor={blue},
    urlcolor={blue}
}

\newcommand\blfootnote[1]{
  \begingroup
  \renewcommand\thefootnote{}\footnote{#1}
  \addtocounter{footnote}{-1}
  \endgroup
}

\newtheorem{lemma}{Lemma}
\newtheorem{theorem}{Theorem}
\newtheorem*{theorem*}{Theorem}

\newtheorem{prop}{Proposition}

\newtheorem{defin}{Definition}

\newtheorem{rem}{Remark}

\newtheorem*{hypo*}{Conjecture}
\newtheorem{example}{Example}

\makeatletter\def\blfootnote{\gdef\@thefnmark{}\@footnotetext}\makeatother

\title{An Analogue of Rogers' Theorem on Sieving in Commutative Rings}
\author{Petr Kucheriaviy}
\date{}

\begin{document}

\maketitle

\begin{abstract}
We prove that an analogue of Rogers' theorem on sieving holds for an order if and only if the order is a Dedekind domain. We also prove that it holds for a finite commutative ring if and only if the ring is a direct product of local rings with linearly ordered ideals. 
\end{abstract}

\blfootnote{Support from the Basic Research Program of HSE University is gratefully acknowledged.}

\section{Introduction}

In a recent blog post \cite{tao-rogers-theorem}, Terence Tao drew attention to the following theorem by Rogers.
\begin{theorem*}[Rogers' theorem]
Let $H_1, \ldots, H_r$ be arithmetic progressions that contain $0$. Then 
\[
d \left( \bigcup_{j = 1}^r (a_j + H_j) \right) \ge d \left( \bigcup_{j = 1}^r H_j \right)
\]
for any integers $a_1, \ldots, a_r$. Here $d$ denotes the arithmetic density. 
\end{theorem*}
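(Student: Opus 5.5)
The plan is to reduce to a statement about a finite cyclic group and then induct on its order by projecting onto a quotient. First, write $H_j = d_j\mathbb{Z}$ (if some $H_j=\{0\}$, discard it: it has density $0$ and removing it can only shrink the left-hand union). Let $N=\operatorname{lcm}(d_1,\ldots,d_r)$. Both unions are $N$-periodic, so their densities are their sizes modulo $N$ divided by $N$. Writing $G=\mathbb{Z}/N\mathbb{Z}$ and letting $H_j$ also denote the subgroup $d_j\mathbb{Z}/N\mathbb{Z}$, it suffices to prove
\[
\Bigl|\bigcup_{j}(a_j+H_j)\Bigr| \ge \Bigl|\bigcup_j H_j\Bigr|
\]
for arbitrary subgroups $H_j$ of an arbitrary finite cyclic group $G$ and arbitrary $a_j\in G$. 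I will prove this by induction on $|G|$; the case $|G|=1$ is trivial.

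For the inductive step, pick a prime $p$ dividing $N$, let $p^k\,\|\,N$, and let $P\le G$ be the unique subgroup of order $p$. Let $\pi\colon G\to G'=G/P\cong \mathbb{Z}/(N/p)$. Since $G$ is cyclic, every subgroup $H_j$ either contains $P$ or meets it trivially, according to whether $p$ divides $|H_j|$. In the second case $p\nmid |H_j|$, so $H_j$ lies in the subgroup $Q=p^k\mathbb{Z}/N\mathbb{Z}$ of order $N/p^k$, and $Q\cap P=0$. Split the indices into $J_1=\{j: P\subseteq H_j\}$ and $J_2=\{j : H_j\cap P=0\}$. Set
\[
B=\bigcup_{j\in J_1}\pi(a_j+H_j),\quad C=\bigcup_{j\in J_2}\pi(a_j+H_j),
\]
and let $B_0,C_0$ be the same unions with all $a_j=0$. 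These are unions of cosets of the subgroups $\pi(H_j)$ of the cyclic group $G'$.

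Now count the sizes of the unions by their fibres over $G'$. For the shifted union $A=\bigcup_j(a_j+H_j)$: every $y\in B$ has its full fibre of size $p$ in $A$, because the sets $a_j+H_j$ with $j\in J_1$ are unions of $P$-cosets. Every $y\in C\setminus B$ has at least one point of $A$ above it. Hence
\[
|A|\ge p|B|+|C\setminus B| = (p-1)|B|+|B\cup C|.
\]
For the unshifted union the same count is an equality. Over $y\in C_0\setminus B_0$, the only contributing sets are $H_j$ with $j\in J_2$, and all of them lie in $Q$. Since $\pi$ is injective on $Q$, these $H_j$ together contribute exactly one point above $y$. Thus
\[
\Bigl|\bigcup_j H_j\Bigr| = (p-1)|B_0|+|B_0\cup C_0|.
\]
The induction hypothesis applied in $G'$, once to the family $\{\pi(H_j)\}_{j\in J_1}$ and once to the whole family $\{\pi(H_j)\}_j$, gives $|B|\ge|B_0|$ and $|B\cup C|\ge |B_0\cup C_0|$. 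Combining these yields the claim.

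The main obstacle is finding a decomposition in which the unshifted union is counted exactly rather than merely bounded. A naive splitting into the cosets of $pG$ only produces lower bounds on both sides, which cannot be compared. Projecting modulo the subgroup of order $p$ instead, and using that in a cyclic group each subgroup either contains $P$ or lies in the complement $Q$, is the step that makes the exact count possible.
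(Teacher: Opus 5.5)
Your proof is correct, and it takes a genuinely different route from the one implicit in the paper. The paper does not prove Rogers' theorem itself. It recovers it, in Tao's cyclic-group form, as the case $R=\mathbb{Z}/N\mathbb{Z}$ of Theorem~\ref{Rogers for finite rings}. That route splits $\mathbb{Z}/N\mathbb{Z}\cong\prod_p\mathbb{Z}/p^{k_p}\mathbb{Z}$ by the Chinese remainder theorem. It then uses Lemma~\ref{Rogers for direct prod} to move the shifts to $0$ one factor at a time: fibre over one coordinate, apply $(Rg)$ in the other factor to each subfamily $s(x)$, then swap roles. It finishes by observing that the subgroups of $\mathbb{Z}/p^k\mathbb{Z}$ form a chain, so the shifted union already contains a translate of the largest one.

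You never decompose $G$. Instead you quotient by the subgroup $P$ of order $p$, and the dichotomy ``$P\subseteq H_j$ or $H_j\subseteq Q$'' gives the exact identity $|\bigcup_j H_j|=(p-1)|B_0|+|B_0\cup C_0|$. This is matched against the lower bound $(p-1)|B|+|B\cup C|$ for the shifted union, and two uses of the induction hypothesis in $G/P$ close the argument. The fibre counts check out: fibres over $B$ are full since $P\subseteq H_j$ for $j\in J_1$, and there is exactly one point over $C_0\setminus B_0$ since $\pi|_Q$ is injective. The degenerate cases $J_1=\emptyset$ and $J_2=\emptyset$ are fine, as is the passage from densities to $\mathbb{Z}/N\mathbb{Z}$.

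As for what each approach buys: the paper's product-plus-local structure reduces both directions of Theorem~\ref{Rogers for finite rings} to local rings. There the chain condition is visibly exactly what is needed, and its failure produces the counterexample of Example~\ref{key example}. Your argument is a single self-contained induction in which cyclicity enters only through the dichotomy. It carries over essentially verbatim to any finite product of local rings with linearly ordered ideals: take $P$ to be the minimal ideal of one factor, $Q$ the product of the remaining factors, and replace $p$ by $|P|$. So it could also serve as an alternative to Lemma~\ref{Rogers for direct prod} for the ``if'' direction of Theorem~\ref{Rogers for finite rings}.
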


In other words, if we remove the congruence classes $a_i \pmod{q_i}, i = 1, \ldots, r$, from $\mathbb{Z}$, the density of the resulting set is maximised when $a_1 = \ldots = a_r = 0$.

This theorem appears in \cite[p. 242-244]{halb-sequences} in the book by Halberstam and Roth and was communicated to the authors by Rogers. An equivalent result was proved independently by Simpson in \cite[Lemma 2.3]{Simpson}.

Tao's proof of Rogers' theorem in \cite{tao-rogers-theorem} motivates this note.

\begin{defin}
Let $A$ be an abelian group, and let $H_1, \ldots, H_r$ be subgroups of $A$. Let $H := \bigcap_{j = 1}^r H_j$. Denote by $\pi$ the natural projection from $A$ onto $A/H$.
We say that in $A$ the collection of subgroups $\{ H_1, \ldots, H_r \}$ satisfies the condition $(Rg)$ if the following two conditions hold.

$(Rg_0)$ $A/H$ is finite. 

$(Rg_1)$ For all $a_1, \ldots, a_r \in A$ we have
\[
\left| \pi \left( \bigcup_{j = 1}^r (a_j + H_j) \right) \right| \ge \left| \pi \left( \bigcup_{j = 1}^r H_j \right) \right|.
\]
\end{defin}

The condition $(Rg_1)$ formulated in this way makes sense only if $(Rg_0)$ is satisfied. 
Of course, the condition $(Rg_0)$ is automatically satisfied in finite abelian groups.

\begin{prop}
Let $A$ be an abelian group and suppose that $\{ H_1, H_2 \}$ satisfies $(Rg_0)$. Then the collection of subgroups $\{ H_1, H_2 \}$ satisfies $(Rg)$.
\end{prop}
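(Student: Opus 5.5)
Pass to the finite quotient and then count with inclusion–exclusion. Put $H := H_1 \cap H_2$ and $G := A/H$, which is finite by $(Rg_0)$. Write $\bar H_j := \pi(H_j)$ for $j = 1,2$. Since $H \subseteq H_j$, each $H_j$ is a union of cosets of $H$, so $\pi(a_j + H_j) = \pi(a_j) + \bar H_j$ is a coset of $\bar H_j$ in $G$, and $|\bar H_j| = [H_j : H]$. The image of a union is the union of the images, so
\[
\left| \pi \left( (a_1 + H_1) \cup (a_2 + H_2) \right) \right| = |\bar a_1 + \bar H_1| + |\bar a_2 + \bar H_2| - \left| (\bar a_1 + \bar H_1) \cap (\bar a_2 + \bar H_2) \right|,
\]
where $\bar a_j := \pi(a_j)$. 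The same formula with $a_1 = a_2 = 0$ computes $|\pi(H_1 \cup H_2)|$. The first two terms do not depend on the shifts: they equal $|\bar H_1| + |\bar H_2|$ in both cases. It therefore suffices to show that the intersection term is largest when $a_1 = a_2 = 0$.

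For that, use the standard fact about cosets: the intersection of two cosets $x + K$ and $y + L$ of subgroups of an abelian group is either empty or a coset of $K \cap L$. In the first case its size is $0$. In the second, pick $z$ in the intersection; then $x + K = z + K$ and $y + L = z + L$, so the intersection equals $z + (K \cap L)$. Applied in $G$, this gives
\[
\left| (\bar a_1 + \bar H_1) \cap (\bar a_2 + \bar H_2) \right| \le |\bar H_1 \cap \bar H_2|,
\]
with equality when $a_1 = a_2 = 0$. Substituting into the inclusion–exclusion formula yields $(Rg_1)$.

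One can say more. $\bar H_1 \cap \bar H_2 = \pi(H_1 \cap H_2)$ is trivial: if $\pi(h_1) = \pi(h_2)$ with $h_j \in H_j$, then $h_1 - h_2 \in H \subseteq H_2$, so $h_1 \in H_1 \cap H_2$. Hence the shifted intersection has at most one element. This is not needed for the argument, but it serves as a consistency check.

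No step here is a real obstacle. The one point that needs care is that $\pi$ commutes with unions and maps cosets of $H_j$ to cosets of $\bar H_j$ of the same cardinality $[H_j : H]$, which uses $H \subseteq H_j$. Once that is settled, everything reduces to inclusion–exclusion for two sets, and this reduction is exactly why the case $r = 2$ is easy.
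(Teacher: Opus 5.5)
Your proof is correct and follows the paper's argument: inclusion--exclusion reduces $(Rg_1)$ to bounding the intersection term, and the intersection of two cosets is either empty or a coset of the intersection subgroup. The only difference is cosmetic: you work in the finite quotient $G = A/H$, which makes explicit a point the paper uses implicitly, namely that $\pi$ commutes with the intersection because the sets involved are unions of $H$-cosets.
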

\begin{proof}
It is enough to prove that 
$|\pi((a_1 + H_1) \cap (a_2 + H_2))| \le |\pi(H_1 \cap H_2)|$.
If $b \in (a_1 + H_1) \cap (a_2 + H_2)$, then $(a_1 + H_1) \cap (a_2 + H_2) = b + H_1 \cap H_2$. Hence $|\pi((a_1 + H_1) \cap (a_2 + H_2))|$ equals $|\pi(H_1 \cap H_2)|$ or zero. 
\end{proof}

\begin{example} \label{key example}
Let $\mathbb{F}_q$ be a finite field with $q$ elements. Let $H_1, H_2, H_3$ be distinct one-dimensional subspaces of $\mathbb{F}_q^2$. Then $\{ H_1, H_2, H_3 \}$ does not satisfy $(Rg)$. Indeed, let $v \in \mathbb{F}_q^2 \setminus H_2$. Then
\[
|H_1 \cup (H_2 + v) \cup H_3| = 3q - 3 < |H_1 \cup H_2 \cup H_3| = 3q - 2.
\]
\end{example}

Let us define a Dedekind domain as an integrally closed one-dimensional Noetherian domain.
Several equivalent definitions of a Dedekind domain are given in \cite[Theorem 9.3]{atiyah-commalg}. 
Let $K \, | \, \mathbb{Q}$ be an algebraic number field of degree $n$. An order of $K$ is a subring $\mathcal{O}$ of $\mathcal{O}_K$ which contains an integral basis of length $n$. 

\begin{prop}
Let $\mathcal{O}$ be an order. Then $(Rg_0)$ is satisfied for any finite set of nonzero ideals in $\mathcal{O}$. 
\end{prop}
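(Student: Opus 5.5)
Let $I_1, \ldots, I_r$ be nonzero ideals of the order $\mathcal{O}$ and put $H := \bigcap_{j=1}^r I_j$. We must show that $\mathcal{O}/H$ is finite. The plan has two steps: first show that $H$ contains a nonzero rational integer $m$, and then use that $\mathcal{O}$ is a finitely generated abelian group.

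\emph{Step 1: each $I_j$ contains a nonzero rational integer.} Pick $0 \ne \alpha \in I_j$. Since $\alpha \in \mathcal{O} \subseteq \mathcal{O}_K$, it is integral over $\mathbb{Z}$. Take a monic relation of minimal degree
\[
\alpha^k + c_{k-1}\alpha^{k-1} + \cdots + c_0 = 0, \qquad c_i \in \mathbb{Z}.
\]
Minimality forces $c_0 \ne 0$: since $K$ is a field and $\alpha \neq 0$, we could otherwise divide by $\alpha$ and get a shorter relation. Therefore
\[
c_0 = -\alpha(\alpha^{k-1} + c_{k-1}\alpha^{k-2} + \cdots + c_1) \in \alpha\mathcal{O} \subseteq I_j .
\]
Call this integer $m_j$. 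The product $m := m_1 \cdots m_r$ is then a nonzero integer lying in every $I_j$, hence $m \in H$, so $m\mathcal{O} \subseteq H$.

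\emph{Step 2: $\mathcal{O}/m\mathcal{O}$ is finite.} The ring $\mathcal{O}$ contains an integral basis of length $n$, so it contains a free $\mathbb{Z}$-module of rank $n$. On the other hand, $\mathcal{O} \subseteq \mathcal{O}_K \cong \mathbb{Z}^n$, and subgroups of finitely generated free abelian groups are free. Hence $\mathcal{O} \cong \mathbb{Z}^n$ as an additive group. It follows that $|\mathcal{O}/m\mathcal{O}| = |m|^n < \infty$. Since $\mathcal{O}/H$ is a quotient of $\mathcal{O}/m\mathcal{O}$, it is finite as well, which is $(Rg_0)$.

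The only mildly delicate point is Step 1, namely checking that $c_0 \ne 0$; this is handled by minimality of the monic relation, or alternatively by taking the norm $N_{K/\mathbb{Q}}(\alpha)$. Step 2 is just the structure theorem for finitely generated abelian groups.
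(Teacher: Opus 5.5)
Your proof is correct and follows essentially the paper's argument. Both find a nonzero rational integer in the ideal as the constant term of an integral equation (the paper uses the characteristic polynomial, you use a minimal monic relation), then use that $\mathcal{O}$ is finitely generated over $\mathbb{Z}$. The only cosmetic difference is that the paper first notes $H$ is itself a nonzero ideal because $\mathcal{O}$ is a domain, whereas you multiply the integers $m_j$ found in each $I_j$.
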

\begin{proof}
Since any order $\mathcal{O}$ is an integral domain, the intersection of any finite set of nonzero ideals in $\mathcal{O}$ is a nonzero ideal. 
Let $I$ be a nonzero ideal. Let $a \in I$ be a nonzero element and let $f$ be its characteristic polynomial. The constant term of $f$ lies in $I \cap \mathbb{Z}$, and hence $J = I \cap \mathbb{Z}$ is a nonzero ideal in $\mathbb{Z}$. 
Thus $\mathcal{O}/I$ is a module of finite rank over $\mathbb{Z} / J$ and hence is finite. 
\end{proof}

\begin{theorem} \label{Rogers for orders}
    Let $\mathcal{O}$ be an order. Then $\mathcal{O}$ is a Dedekind domain if and only if any finite set of nonzero ideals in $\mathcal{O}$ satisfies $(Rg)$.
\end{theorem}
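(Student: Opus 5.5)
Both directions go through reduction to a finite quotient ring. Given nonzero ideals $I_1,\dots,I_r$ in $\mathcal{O}$, set $I=\bigcap_j I_j$; by the preceding Proposition $\mathcal{O}/I$ is finite. Condition $(Rg_1)$ for $\{I_1,\dots,I_r\}$ in $\mathcal{O}$ is then literally the statement that in the finite group $\mathcal{O}/I$ the images $I_j/I$ satisfy
\[
\Bigl|\bigcup_j (\bar a_j + I_j/I)\Bigr| \ge \Bigl|\bigcup_j I_j/I\Bigr|.
\]
So everything reduces to ideals of the finite ring $R=\mathcal{O}/I$.

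\emph{Dedekind $\Rightarrow$ (Rg).} Write $I=\prod \mathfrak p_i^{e_i}$. By the Chinese remainder theorem $R\cong\prod_i \mathcal{O}/\mathfrak p_i^{e_i}$, and every ideal of $R$ is a product $\prod_i J_i$ with $J_i$ an ideal of $\mathcal{O}/\mathfrak p_i^{e_i}$. Each factor is a local ring whose ideals $\mathfrak p_i^k/\mathfrak p_i^{e_i}$ form a chain.

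The plan is to prove by induction on the number of factors that such a product ring satisfies $(Rg)$, following Tao's proof for $\mathbb{Z}$. For a single chain ring, the cosets $a_j+J_j$ of chain-ordered subgroups are either nested or disjoint. Hence the union of the cosets is a disjoint union of cosets of the maximal members, and its size is at least the size of the largest $J_j$, which equals $|\bigcup J_j|$.

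For the inductive step, write $R=R'\times S$ with $S$ a chain ring. Fiber over $S$: for $s\in S$, the fiber of $\bigcup_j(a_j+J_j)$ at $s$ is the union over those $j$ with $s\in b_j+J_j^{S}$ of the cosets $a'_j+J'_j$. By induction, each fiber has size at least $|\bigcup_{j\in T(s)} J'_j|$, where $T(s)=\{j: s\in b_j+J^S_j\}$. This leaves a sum over $s$ of a function $f(T(s))$, where $f(T)=|\bigcup_{j\in T}J'_j|$ is monotone in $T$.

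One then needs a rearrangement inequality on the chain ring $S$: $\sum_s f(T(s))\ge\sum_s f(T_0(s))$, where $T_0$ corresponds to all $b_j=0$. The key step is to compare the distributions of the sets $T(s)$. When the $b_j$ are shifted to $0$, the sets $T(s)$ become nested along the chain and pile up on the smallest possible set of points, namely $0$ and the small ideals. I would prove this by writing $f(T)$ via inclusion–exclusion or a layer-cake over the $J'_j$: for each element $x\in R'$, count the $s$ for which some $j$ with $x\in J'_j$ has $s\in b_j+J^S_j$. This count is the size of a union of cosets in the chain ring $S$, which the base case shows is minimized at $b=0$. I expect this to be the cleanest route. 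The layer-cake rewriting, namely summing over $x\in R'$ rather than over $T$, together with the plain Fubini argument, is the point needing care, and it makes the induction essentially immediate: the total size is $\sum_{x}\bigl|\bigcup_{j:\,x\in a'_j+J'_j}(b_j+J_j^S)\bigr|$. Getting both coordinates shifted simultaneously is the main obstacle. I would handle it by first fixing $b$ and using induction in $R'$ fiberwise over $s$, which gives sizes $|\bigcup_{j\in T(s)}J'_j|$. I would then swap the order of summation and apply the chain-ring case in $S$ for each $x\in R'$.

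\emph{Not Dedekind $\Rightarrow$ failure.} If $\mathcal{O}$ is not Dedekind, it is not integrally closed, so some maximal ideal $\mathfrak m$ is not invertible, i.e. $\mathcal{O}_{\mathfrak m}$ is not a DVR. Then $\dim_{k}\mathfrak m/\mathfrak m^2\ge 2$ with $k=\mathcal{O}/\mathfrak m$. Take $I=\mathfrak m^2$, so that $\mathfrak m/\mathfrak m^2$ contains a copy of $k^2$. Choose a $2$-dimensional quotient $V$ of $\mathfrak m/\mathfrak m^2$, which amounts to choosing an ideal $N$ with $\mathfrak m^2\subseteq N$ and $\mathfrak m/N\cong k^2$. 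Take three distinct ideals $H_1,H_2,H_3$ between $N$ and $\mathfrak m$ corresponding to distinct lines in $k^2$; these are ideals because $\mathfrak m$ acts trivially on $\mathfrak m/N$. Their intersection is $N$, and with the shift $a_2=v\in\mathfrak m\setminus H_2$ Example~\ref{key example} shows that $(Rg_1)$ fails in $\mathcal{O}/N$. The only nontrivial input here is the standard fact that a Noetherian local domain of dimension one is a DVR iff $\dim\mathfrak m/\mathfrak m^2=1$ (Atiyah–Macdonald 9.2), and that $\mathcal{O}$ is Dedekind iff all its localizations are DVRs.
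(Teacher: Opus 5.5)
Your proof is correct. For Dedekind $\Rightarrow (Rg)$ you follow the paper. CRT reduces $\mathcal{O}/I$ to chain rings $\mathcal{O}/\mathfrak p^{e}$, and the chain case is immediate. Your two-pass Fubini argument (induction in $R'$ fibrewise over $s\in S$, then swapping the sum and using the chain case in $S$ for each $x\in R'$) is exactly the proof of Lemma~\ref{Rogers for direct prod}.

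The converse is where you genuinely diverge. The paper obtains it as a corollary of a classification. It combines Lemma~\ref{Dedekind criterion} (if every $\mathcal{O}/I$ is a product of chain rings, then each $\mathcal{O}_{\mathfrak p}$ has totally ordered ideals, hence is a valuation ring, hence integrally closed) with the local half of Theorem~\ref{Rogers for finite rings}. That local half needs an induction on $|R|$ to pass from an arbitrary non-chain local ring to one with at least two minimal ideals, before the socle and Example~\ref{key example} can be used.

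You instead invoke the cotangent-space criterion for DVRs (Atiyah--Macdonald, Proposition 9.2) to find $\mathfrak m$ with $\dim_k \mathfrak m/\mathfrak m^2\ge 2$. You then choose $N$ with $\mathfrak m^2\subseteq N\subseteq\mathfrak m$ and $\mathfrak m/N\cong k^2$. Now $\mathcal{O}/N$ is local with socle exactly $\mathfrak m/N$, so you land directly in the paper's final case, with no induction and no valuation rings. You also get an explicit bad triple of ideals in $\mathcal{O}$, which yields the Remark on three ideals at once. The paper's route costs more but buys the finite-ring classification and the Dedekind criterion as results in their own right.

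A few inputs you leave implicit should be stated:
\begin{itemize}
\item An order is a one-dimensional Noetherian domain. You need this both to read \emph{not Dedekind} as \emph{not integrally closed} and to apply Proposition 9.2 to $\mathcal{O}_{\mathfrak m}$.
\item $\dim_k \mathfrak m/\mathfrak m^2\neq 0$ by Nakayama, so \emph{not equal to $1$} really gives $\ge 2$.
\item $\mathfrak m/\mathfrak m^2\cong \mathfrak m\mathcal{O}_{\mathfrak m}/\mathfrak m^2\mathcal{O}_{\mathfrak m}$.
\end{itemize}
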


\begin{example}
    Let $R = \mathbb{Z}[2i]$. Since $R$ is not a Dedekind domain, Theorem \ref{Rogers for orders} says that there should be a set of ideals of $R$ which does not satisfy $(Rg)$. Indeed, let $I_1 = (2), I_2 = (2i), I_3 = (2 + 2i, 4)$. Then $I_2 \not\subset I_1 \cup I_3$, but $2 + I_2 \subset I_1 \cup I_3$. Hence $I_1, I_2, I_3$ do not satisfy $(Rg)$.
\end{example}

The following reformulation of Rogers' theorem appears in \cite{tao-rogers-theorem}.

\begin{theorem*}
    In a finite cyclic abelian group, $(Rg)$ holds for all finite sets of subgroups.
\end{theorem*}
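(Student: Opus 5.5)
The plan is to prove the statement by induction on the order of the cyclic group, following the structure of Tao's argument: reduce to a $p$-subgroup / quotient decomposition and then use an averaging argument over cosets. Let $A = \mathbb{Z}/N$ and let $H_1,\dots,H_r$ be subgroups; these are $d_j\mathbb{Z}/N$ for divisors $d_j \mid N$, and we may assume $H = \bigcap H_j = 0$, i.e.\ $N = \mathrm{lcm}(d_j)$, by passing to $A/H$, which is again cyclic. We need $|\bigcup (a_j+H_j)| \ge |\bigcup H_j|$, equivalently that the complement $S(a) = A\setminus \bigcup(a_j+H_j)$ is largest when all $a_j = 0$.

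Fix a prime $p \mid N$, write $N = p^k m$ with $p\nmid m$, and let $P$ be the unique subgroup of order $p$ (or better, use the CRT decomposition $A \cong \mathbb{Z}/p^k \times \mathbb{Z}/m$). Each $H_j$ decomposes as $H_j^{(p)}\times H_j'$, where $H_j^{(p)}$ is one of the chain of subgroups $p^{e}\mathbb{Z}/p^k$. The key structural fact to exploit is that subgroups of $\mathbb{Z}/p^k$ are linearly ordered. For each $x \in \mathbb{Z}/p^k$, the fiber of $S(a)$ over $x$ is the complement in $\mathbb{Z}/m$ of $\bigcup_{j : x \in a_j^{(p)}+H_j^{(p)}} (a_j' + H_j')$. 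By induction on $N$ (applied to $\mathbb{Z}/m$), each fiber has size at most the size of the corresponding fiber with $a_j'=0$, with the same index set $J(x)=\{j : x \in a_j^{(p)}+H_j^{(p)}\}$. So we reduce to the case $a_j' = 0$, and then the fiber size is a decreasing set function $F(J(x))$ of the index set, $F(J) = |\mathbb{Z}/m \setminus \bigcup_{j\in J} H_j'|$, which is monotone decreasing under inclusion.

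It remains to show $\sum_{x\in \mathbb{Z}/p^k} F(J_a(x)) \le \sum_x F(J_0(x))$, where $J_0(x) = \{j : x \in H_j^{(p)}\}$. Here the chain structure is used: with $a^{(p)}=0$ the sets $J_0(x)$ are nested (they depend only on the $p$-adic valuation of $x$, larger valuation giving a larger set), and the multiset of sizes of cosets is the same. I would prove this by a rearrangement / majorisation argument: for each threshold, the number of $x$ with $J(x)\supseteq T$ for a family $T$ is at most what it is in the nested configuration, since $\bigcap_{j\in T}(a_j+H_j^{(p)})$ is either empty or a coset of the smallest $H_j^{(p)}$, $j\in T$, which has the same size as $\bigcap_{j\in T}H_j^{(p)}$. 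Expressing $F$ via inclusion–exclusion is not sign-definite, so instead I would order $\mathbb{Z}/p^k$ to show the family $\{J_a(x)\}$ can be ``compressed'' toward $\{J_0(x)\}$: replace $a_j^{(p)}$ one at a time, in order of decreasing $H_j^{(p)}$, by $0$, checking each step does not decrease $\sum_x F$. Alternatively, simpler: the case $k$ arbitrary but only one prime lets us induct on $k$ by splitting $\mathbb{Z}/p^k$ into cosets of $p\mathbb{Z}/p^k$ — those $j$ with $H_j^{(p)}=\mathbb{Z}/p^k$ contain everything; the rest are contained in $p\mathbb{Z}/p^k$, so each $x$ lies in exactly one coset, and on that coset apply induction on $k$, then use that the coset $0+p\mathbb{Z}$ receives all $j$ with indices whose translates could be spread across cosets, combined with monotonicity and a convexity-type (submodularity) inequality for $F$.

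The main obstacle is exactly this last comparison step: the $p$-part cosets $a_j^{(p)}+H_j^{(p)}$ with small $H_j^{(p)}$ can be distributed among different cosets of $p\mathbb{Z}/p^k$, and one must show concentrating them all in the zero coset is optimal. I expect to need that $F$ is supermodular-like, i.e.\ $F(J\cup J') + F(J\cap J') \ge F(J)+F(J')$ fails in general, so I would instead use the fact that $F(J) = |\bigcap_{j\in J}(\mathbb{Z}/m\setminus H'_j)|$ and argue element-by-element in $\mathbb{Z}/m$: for fixed $y\in\mathbb{Z}/m$, the condition becomes counting $x$ avoiding the cosets $a_j^{(p)}+H_j^{(p)}$ for $j$ with $y\in H_j'$, which is a one-dimensional statement in the cyclic $p$-group where the chain property makes it easy (the union of cosets of chain subgroups has size at least the largest one, with equality for $a=0$ — more precisely, the union of nested subgroups equals the largest, which is minimal possible). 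Summing over $y$ would then finish the proof, and I would try this element-wise reduction first, since it sidesteps the combinatorics entirely.
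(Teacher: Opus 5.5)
Your proposal is correct once you commit to the element-wise reduction in your last paragraph, and that route is essentially the paper's argument. The CRT splitting $\mathbb{Z}/N\cong\mathbb{Z}/p^k\times\mathbb{Z}/m$, fibering over $\mathbb{Z}/p^k$ to set the $\mathbb{Z}/m$-shifts to zero, then double counting over $y\in\mathbb{Z}/m$ using that subgroups of $\mathbb{Z}/p^k$ form a chain, is exactly the two-step fibering of Lemma~\ref{Rogers for direct prod} combined with the local case in the proof of Theorem~\ref{Rogers for finite rings}. You can delete the majorisation, compression and induction-on-$k$ detours, and your ``main obstacle'' dissolves once you fiber over the other factor; note also that your aside is wrong, since $F(J)=|\mathbb{Z}/m\setminus\bigcup_{j\in J}H_j'|$ does satisfy $F(J\cup J')+F(J\cap J')\ge F(J)+F(J')$, because $J\mapsto|\bigcup_{j\in J}H_j'|$ is submodular.
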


We prove the analogous result for finite rings. 
\begin{theorem} \label{Rogers for finite rings}
Let $R$ be a finite commutative ring. Then any finite set of ideals in $R$ satisfies $(Rg)$ if and only if $R$ is a direct product of local rings with linearly ordered ideals.
\end{theorem}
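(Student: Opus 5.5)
We prove both directions by reducing to local factors and, in the local case, to the cyclic-group version of Rogers' theorem from \cite{tao-rogers-theorem}.

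\emph{Reduction to local rings.} A finite commutative ring is Artinian, so $R \cong R_1 \times \cdots \times R_m$ with each $R_i$ local. Every ideal of $R$ has the form $I = I^{(1)} \times \cdots \times I^{(m)}$ with $I^{(i)}$ an ideal of $R_i$.

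\emph{Failure of $(Rg)$ when ideals are not linearly ordered.} Suppose some $R_i$, with maximal ideal $\mathfrak m$ and residue field $k = R_i/\mathfrak m$, has incomparable ideals. Then $R_i$ is not a chain ring. For a local Artinian ring this happens exactly when some ideal $J$ has $\dim_k J/\mathfrak m J \ge 2$. Indeed, if every ideal is principal then each $\mathfrak m^t$ is generated by $\pi^t$ for a generator $\pi$ of $\mathfrak m$, and by Nakayama the ideals form a chain.

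Take such a $J$ and set $N = \mathfrak m J$, so $V = J/N$ is a $k$-vector space of dimension at least $2$. Choose a two-dimensional subspace $W \subseteq V$ and a complement $U$. Every $k$-subspace of $V$ containing $U$ pulls back to an ideal of $R_i$, because $\mathfrak m$ acts trivially on $V$ and so $k$-subspaces are exactly $R_i$-submodules. This gives three ideals $H_1, H_2, H_3$ between $N$ and $J$ whose images in $J/(U+N) \cong k^2$ are three distinct lines.

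Lift them to $R$ by taking the full factors $R_j$ in every other coordinate. The intersection $H$ of the three lifted ideals is then the preimage of $U+N$. In the quotient by $H$, the unions in $(Rg_1)$ live inside a coset of $k^2$, and Example~\ref{key example} with $q = |k|$ gives a translate $a_2 \in J$ for which $(Rg_1)$ fails. One should check that the translate stays within $J$; it does, because $v$ can be chosen in $J$.

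\emph{Sufficiency.} Suppose each $R_i$ has linearly ordered ideals. A finite local ring whose ideals form a chain is a principal ideal ring, with ideals $\mathfrak m^t$.

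The key step is to build a ring $S$, a finite product of finite chain rings, together with an injective map from ideals into subgroups of a cyclic group. The natural candidate is to pick, for each $i$, a prime $p_i$ and an exponent $e_i$ with $R_i$ having length $e_i$. Then the ideal lattice of $R$ is isomorphic to the subgroup lattice of the cyclic group $C = \prod_i \mathbb Z/p_i^{e_i}$, with the $p_i$ chosen distinct.

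However, $(Rg_1)$ counts cosets, not just lattice data, so the identification must also preserve the cardinalities $|\pi(\cdot)|$. The cleaner route is to imitate Tao's proof directly. Tao proves Rogers' theorem by induction on the number of primes and on exponents, using the chain structure of subgroups of a cyclic $p$-group together with a compression/averaging argument over the finest level.

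In a chain ring the layers $\mathfrak m^t/\mathfrak m^{t+1}$ are one-dimensional over $k$. These play the role of $\mathbb Z/p$, with $p$ replaced by $q = |k|$, and all counting arguments depend only on the index sizes $q^t$. So the main work, and the expected main obstacle, is re-running Tao's argument with the following ingredients:
\begin{itemize}
\item Compare $\bigcup (a_j + H_j)$ with an arrangement in which the cosets at the deepest level are shifted to pass through $0$.
\item Observe that the fibres over $R/\mathfrak m^{e-1}$ behave like a one-dimensional space in which subgroups are either everything or nothing.
\item Pass from the local to the product case by induction on the number of factors, treating $R = R_1 \times R'$ fibrewise: for each $x \in R'$, apply the local statement in $R_1$ to the slice. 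This requires the product form of ideals and a monotonicity (stacking) argument as in Tao's multi-prime step.
\end{itemize}

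I would check the fibrewise step carefully. It is there that a lattice isomorphism alone is insufficient, and the needed inequality is of the form ``union of nested translated slices is at least the union of untranslated slices''.
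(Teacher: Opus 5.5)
Your necessity argument is correct, and it is more direct than the paper's. The paper inducts on $|R|$. When there is a unique minimal ideal $J_0$ it passes to $R/J_0$; otherwise it takes three lines in the socle $\operatorname{Ann}(\mathfrak m)$, whose intersection is $0$. You instead take a non-principal ideal $J$. One can simply take $J=\mathfrak m$, since a local Artinian ring with principal maximal ideal is a chain ring. You then use the quotient by the intersection, which is built into $(Rg_1)$, to realise Example~\ref{key example} inside $J/(U+N)\cong k^2$ with no induction.

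The sufficiency half, however, is not proved. It is a plan whose decisive step you explicitly leave open (\emph{I would check the fibrewise step carefully}), and you put the difficulty in the wrong place.

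The local case needs no Tao-style compression over the layers $\mathfrak m^t/\mathfrak m^{t+1}$ and no comparison with cyclic groups. If $H_1\subseteq\cdots\subseteq H_r$, then $\bigcup_j H_j=H_r$, while $\bigcup_j(a_j+H_j)\supseteq a_r+H_r$, so $(Rg_1')$ holds at once. Here $(Rg_1')$ is equivalent to $(Rg_1)$ because each union is a union of cosets of $H=\bigcap_j H_j$.

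The real content is the product step, and your sketch describes only one of the two fibrewise passes it needs. Write $R=R_1\times R'$, $I_j=I_j^{(1)}\times I_j'$ and $a_j=(\alpha_j,\beta_j)$.
\begin{itemize}
\item \textbf{First pass.} The fibre of $\bigcup_j(a_j+I_j)$ over $y\in R'$ is $\bigcup_{j\in s(y)}(\alpha_j+I_j^{(1)})$, with $s(y)=\{j : y\in\beta_j+I_j'\}$. This index set does not depend on the $\alpha_j$. Apply $(Rg_1')$ in $R_1$ to this subcollection and sum over $y$. This shows that replacing every $\alpha_j$ by $0$ does not increase the count.
\item \textbf{Second pass.} Now fibre over $x\in R_1$. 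The index set $\{j : x\in I_j^{(1)}\}$ no longer depends on the translates. Apply $(Rg_1')$ in $R'$, by induction on the number of factors, to replace every $\beta_j$ by $0$.
\end{itemize}
This is the paper's Lemma~\ref{Rogers for direct prod}. No monotonicity or stacking argument is needed, and the argument works for arbitrary factors satisfying $(Rg)$, not only chain rings.
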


\begin{rem}
It follows from the proofs that in Theorem \ref{Rogers for orders} and Theorem \ref{Rogers for finite rings}, it is sufficient to assume that $(Rg)$ holds for sets of three ideals.
\end{rem}

\section{Proof of Theorem \ref{Rogers for finite rings}}

Note that in finite abelian groups $(Rg_1)$ is equivalent to

$(Rg_1')$: For all $a_1, \ldots, a_r \in A$
\[
\left|  \bigcup_{j = 1}^r (a_j + H_j)  \right| \ge \left| \bigcup_{j = 1}^r H_j \right|.
\]

\begin{lemma} \label{Rogers for direct prod}
    Let $R$ be a finite commutative ring and suppose that $R = R_1 \times \ldots \times R_k$. Then $(Rg)$ holds for any set of ideals in $R$ if and only if it holds for any set of ideals in $R_i$ for any $i$.
\end{lemma}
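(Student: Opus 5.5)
Ideals of $R = R_1 \times \ldots \times R_k$ are exactly products $I = I^{(1)} \times \ldots \times I^{(k)}$ with $I^{(i)}$ an ideal of $R_i$ (use the idempotents $e_i$: $e_i I = I \cap R_i$). Throughout, $R$ is finite, so I will work with $(Rg_1')$ and ignore $\pi$. By induction on $k$ it suffices to treat $R = S \times T$.

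\emph{Necessity.} Suppose $(Rg)$ holds for all sets of ideals of $R$, and let $J_1, \ldots, J_r$ be ideals of $S$ and $a_1, \ldots, a_r \in S$. Put $H_j = J_j \times T$ and $\tilde a_j = (a_j, 0)$. Then
\[
\bigcup_j (\tilde a_j + H_j) = \Bigl(\bigcup_j (a_j + J_j)\Bigr) \times T ,
\]
so both sides of $(Rg_1')$ for $\{H_j\}$ are $|T|$ times the corresponding sides for $\{J_j\}$. Hence $(Rg_1')$ transfers to $S$, and symmetrically to $T$.

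\emph{Sufficiency.} Assume $(Rg)$ for all finite sets of ideals in $S$ and in $T$. Let $H_j = J_j \times K_j$ and $a_j = (s_j, t_j)$. Counting fibres over $T$,
\[
\Bigl|\bigcup_j (a_j + H_j)\Bigr| = \sum_{t \in T} \Bigl|\bigcup_{j \in F(t)} (s_j + J_j)\Bigr| ,
\]
where $F(t) = \{ j : t \in t_j + K_j\}$. This is a sum of union sizes over subfamilies, so I will prove the stronger statement that for every monotone family of index subsets the weighted count is minimized at zero shifts. Here is the fibre count:
\[
\sum_{s \in S,\, t \in T} \mathbf 1\bigl[F_S(s) \cap F_T(t) \neq \emptyset\bigr],
\]
where $F_S(s) = \{ j : s \in s_j + J_j\}$ and $F_T(t)$ is defined similarly.

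Direct Rogers-type inequalities only control the sizes of unions, so I plan to use the following strengthening.
\begin{quote}
\textbf{Claim.} $(Rg)$ for all finite sets of ideals of $S$ implies that, for every family of ideals $J_1, \ldots, J_r$ and every upward closed family $\mathcal U$ of subsets of $\{1, \ldots, r\}$,
\[
\#\{ s \in S : F_S(s) \in \mathcal U \} \ \ge\ \#\{ s \in S : F_S^0(s) \in \mathcal U \},
\]
where $F_S^0(s) = \{ j : s \in J_j\}$.
\end{quote}
The condition $F(s) \in \mathcal U$ means that $s$ lies in $\bigcup_{A \text{ minimal in } \mathcal U} \bigcap_{j \in A}(s_j + J_j)$. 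Each $\bigcap_{j \in A}(s_j + J_j)$ is either empty or a coset $c_A + \bigcap_{j \in A} J_j$ of an ideal, as in Proposition 1. So the left side is the size of a union of cosets of the ideals $\bigcap_{j \in A} J_j$, some of which may be missing. Applying $(Rg_1')$ to these ideals bounds the left side below by the right side. A missing coset only helps: removing ideals from the zero-shift configuration can only decrease it. The Claim therefore follows.

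Given the Claim, I fix $t$ and apply it in $S$ with $\mathcal U_t = \{ A : A \cap F_T(t) \neq \emptyset\}$, which is upward closed. This replaces the $S$-shifts by zero. Then, for each fixed $s$, I apply the Claim in $T$ with $\mathcal U = \{ B : B \cap F_S^0(s) \neq \emptyset\}$, which replaces the $T$-shifts by zero. The resulting total is $|\bigcup_j H_j|$, which gives $(Rg_1')$ for $R$.

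The main obstacle is the Claim. In particular I must check that $(Rg)$ is being applied to genuine ideals, namely intersections of the $J_j$, with shifts that are arbitrary or omitted, and that the monotonicity under deleting sets is correct. Both points appear fine.
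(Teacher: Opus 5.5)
Your final computation is correct and is essentially the paper's proof. You fibre over one factor and observe that each fibre is the union of the cosets $s_j+J_j$ over the subfamily $j\in F_T(t)$. You apply $(Rg_1')$ in $S$ to that subfamily to remove the $S$-shifts, then swap the roles of $S$ and $T$. The paper does the same thing with $s(x)$, $\varphi_2(a_j+I_j)$ and the intermediate shifts $b_j=(\varphi_1(a_j),0)$. Your necessity argument with $J_j\times T$ and shifts $(a_j,0)$ just spells out the paper's ``obviously''.

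However, the Claim you route this through is false, and so is the ``stronger statement'' about arbitrary monotone families. The step ``a missing coset only helps'' goes the wrong way. What $(Rg_1')$ gives you is $\#\{s : F_S(s)\in\mathcal U\}\ge|\bigcup_{A\in\mathcal M'}L_A|$. Here $L_A=\bigcap_{j\in A}J_j$, and $\mathcal M'$ is the set of minimal $A$ whose shifted intersection is nonempty. The zero-shift side, though, is $|\bigcup_{A\in\mathcal M}L_A|$ over all minimal $A$. That is \emph{at least} $|\bigcup_{A\in\mathcal M'}L_A|$, so the chain of inequalities does not close.

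Concretely, take $S=\mathbb{Z}/4\mathbb{Z}$; its ideals are totally ordered, so $(Rg)$ holds. Let $J_1=J_2=2S$, $s_1=0$, $s_2=1$ and $\mathcal U=\{\{1,2\}\}$. The left side is $|2S\cap(1+2S)|=0$, while the right side is $|2S|=2$. This is exactly the ``empty'' alternative in Proposition 1, and you said you had checked this point.

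Your proof survives because you only use families $\mathcal U=\{A : A\cap F\neq\emptyset\}$, whose minimal elements are the singletons $\{j\}$, $j\in F$. For these, the shifted intersection over $A=\{j\}$ is just $s_j+J_j$, which is never empty, so $\mathcal M'=\mathcal M$. The Claim is then literally $(Rg_1')$ for the subfamily $\{J_j : j\in F\}$, and no intersections of ideals are involved. So delete the Claim and the remark about general monotone families. Apply $(Rg_1')$ directly to the subfamilies indexed by $F_T(t)$ and then by $F^0_S(s)$; what remains is a complete proof, and it is the paper's argument.
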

\begin{proof}
We argue by induction on $k$. The case $k = 1$ is trivial, and the inductive step is reduced to the case $k = 2$ via the decomposition $R = (R_1 \times \ldots \times R_{k - 1}) \times R_k$.
Thus it is enough to prove the statement for $k = 2$. We have $R = R_1 \times R_2$.
If $(Rg)$ holds in $R$ for any ideals, then obviously it holds in $R_1$ and $R_2$. Now suppose that $(Rg)$ holds for ideals in each $R_i$. 
Let $I_1, I_2$ be ideals in $R$. Denote by $\varphi_i : R \to R_i$ the canonical projection. 
We have
\[
(a_1 + I_1) \cup (a_2 + I_2) = \left( \varphi_1(a_1 + I_1) \times \varphi_2(a_1 + I_1) \right) \cup \left( \varphi_1(a_2 + I_2) \times \varphi_2(a_2 + I_2) \right).
\]
Let $x \in R_1$. Let $s(x) := \{ j : x \in \varphi_1(a_j + I_j) \}$.
Then
\begin{multline*}
\left| (a_1 + I_1) \cup (a_2 + I_2)  \right| = 
 \left| \bigcup_{x \in R_1} \{x \} \times \bigcup_{j \in s(x)} \varphi_2(a_j + I_j)
  \right| = \sum_{x \in R_1} \left| \bigcup_{j \in s(x)} \varphi_2(a_j + I_j)
  \right|  \ge 
 \\
  \sum_{x \in R_1} \left| \bigcup_{j \in s(x)} \varphi_2(I_j)
  \right| =
  \left| \bigcup_{x \in R_1} \{x \} \times \bigcup_{j \in s(x)} \varphi_2(I_j)
  \right| 
  = |(b_1 + I_1) \cup (b_2 + I_2)|,
\end{multline*}
where $b_1 = (\varphi_1(a_1), 0) \in R_1 \times R_2$, $b_2 = (\varphi_1(a_2), 0) \in R_1 \times R_2$. Repeating the argument, we find that $|(b_1 + I_1) \cup (b_2 + I_2)| \ge |I_1 \cup I_2|$.

Hence any finite set of ideals in $R$ satisfies $(Rg_1')$ and hence satisfies $(Rg)$.
\end{proof}

\begin{proof}[Proof of Theorem \ref{Rogers for finite rings}]
Let $R$ be a finite ring. It can be uniquely (up to isomorphism) decomposed into a finite direct product of local rings \cite[Theorem 8.7]{atiyah-commalg}. 
Thus Lemma \ref{Rogers for direct prod} implies that it is enough to prove Theorem \ref{Rogers for finite rings} for finite local rings.

Let $R$ be a finite local ring. We want to show that any finite set of ideals in $R$ satisfies $(Rg)$ if and only if ideals of $R$ are totally ordered. 
Let us prove this statement by induction on the number of elements of $R$.

Suppose that the ideals of $R$ are totally ordered. After relabelling so that $H_1 \subset \ldots \subset H_r$, the right-hand side of $(Rg_1')$ equals $|H_r|$, while the left-hand side is at least $|a_r + H_r| = |H_r|$. We see that $(Rg_1')$ and hence $(Rg)$ are satisfied.

Suppose that the ideals of $R$ are not totally ordered. First, suppose that $R$ has a unique minimal ideal $J_0$. Then any nonzero ideal of $R$ contains $J_0$. Hence the ideals of $R/J_0$ are not totally ordered and by the induction hypothesis there exist ideals $I_1, \ldots, I_r$ in $R / J_0$ that do not satisfy $(Rg)$. Let $\varphi : R \to R/J_0$ be the canonical projection. It is easy to see that $\varphi^{-1}(I_1), \ldots, \varphi^{-1}(I_r)$ do not satisfy $(Rg)$.

Finally suppose that $R$ has more than one minimal ideal. Let $\mathfrak{m}$ be the unique maximal ideal of the local ring $R$. Let $J = \operatorname{Ann}(\mathfrak{m})$ be the annihilator of $\mathfrak{m}$. The ideal $J$ is a vector space over $R / \mathfrak{m}$ and its one-dimensional subspaces are the minimal ideals. 
Let $(a)$ be a minimal ideal in $R$ (it is clear that minimal ideals are principal). Suppose that $\mathfrak{m} (a) \ne (0)$. Then $\mathfrak{m} (a) = (a)$ since $\mathfrak{m} (a) \subset (a)$ and $(a)$ is minimal. By Nakayama's lemma \cite[Proposition 2.6]{atiyah-commalg} $(a) = 0$, which leads to a contradiction. Hence $(a) \subset J$. This shows that minimal ideals are precisely the one-dimensional vector subspaces of $J$. By our hypothesis there exist at least two minimal ideals and hence $J$ has dimension at least $2$ over $R/\mathfrak{m}$. Let $H$ be a $2$-dimensional subspace in $J$. Let $I_1, I_2, I_3$ be distinct one-dimensional subspaces of $H$. They do not satisfy $(Rg)$ by Example \ref{key example}.
\end{proof}

\section{Proof of Theorem \ref{Rogers for orders}}

\begin{lemma} \label{Dedekind criterion}
Let $R$ be a one-dimensional Noetherian domain. Then $R$ is a Dedekind domain if and only if for any nonzero ideal $I$, the ring $R/I$ is a finite direct product of local rings with linearly ordered ideals.
\end{lemma}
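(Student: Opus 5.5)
The plan is to prove both directions by localizing at the finitely many maximal ideals containing $I$.

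\emph{Reduction to local factors.} Since $R$ is a one-dimensional Noetherian domain and $I \neq 0$, the ring $R/I$ is Noetherian of dimension zero, hence Artinian. By \cite[Theorem 8.7]{atiyah-commalg} it splits as
\[
R/I \cong \prod_{\mathfrak{p} \supset I} (R/I)_{\mathfrak{p}} = \prod_{\mathfrak{p}\supset I} R_{\mathfrak{p}}/I R_{\mathfrak{p}}.
\]
The product runs over the finitely many maximal ideals $\mathfrak{p}$ containing $I$. So the condition in the lemma says exactly that every $R_{\mathfrak{p}}/IR_{\mathfrak{p}}$ has linearly ordered ideals.

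\emph{($\Rightarrow$)} Suppose $R$ is Dedekind. Then each $R_{\mathfrak{p}}$ is a discrete valuation ring \cite[Theorem 9.3]{atiyah-commalg}, so its ideals are the powers $\mathfrak{p}^nR_{\mathfrak{p}}$, which form a chain. The ideals of the quotient $R_{\mathfrak{p}}/IR_{\mathfrak{p}}$ correspond to ideals of $R_{\mathfrak{p}}$ containing $IR_{\mathfrak{p}}$, so they are also linearly ordered.

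\emph{($\Leftarrow$)} By \cite[Theorem 9.3]{atiyah-commalg} it suffices to show that $R_{\mathfrak{p}}$ is a DVR for every nonzero prime $\mathfrak{p}$. Equivalently, since $R_{\mathfrak{p}}$ is a one-dimensional Noetherian local domain, it suffices to show that its maximal ideal $\mathfrak{m} = \mathfrak{p}R_{\mathfrak{p}}$ is principal, i.e. $\dim_k \mathfrak{m}/\mathfrak{m}^2 = 1$ with $k = R_{\mathfrak{p}}/\mathfrak{m}$. Here $\mathfrak{m} \neq \mathfrak{m}^2$ by Nakayama, since $\mathfrak{m}\neq 0$.
\begin{itemize}
\item Take $I = \mathfrak{p}^2$. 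The only maximal ideal containing $I$ is $\mathfrak{p}$, so $R/\mathfrak{p}^2$ is local and equals $R_{\mathfrak{p}}/\mathfrak{m}^2$.
\item By hypothesis its ideals are linearly ordered.
\item The ideals lying between $\mathfrak{m}^2$ and $\mathfrak{m}$ correspond to the $k$-subspaces of $\mathfrak{m}/\mathfrak{m}^2$. A vector space of dimension at least $2$ contains two distinct lines, which are incomparable.
\end{itemize}
Hence $\dim_k \mathfrak{m}/\mathfrak{m}^2 = 1$, so $\mathfrak{m}$ is principal by Nakayama, and $R_{\mathfrak{p}}$ is a DVR.

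The only step needing care is the identification $R/\mathfrak{p}^2 \cong R_{\mathfrak{p}}/\mathfrak{p}^2R_{\mathfrak{p}}$. It holds because every element outside $\mathfrak{p}$ is already a unit modulo $\mathfrak{p}^2$: the ring $R/\mathfrak{p}^2$ is local with maximal ideal $\mathfrak{p}/\mathfrak{p}^2$. The remaining steps are standard commutative algebra.
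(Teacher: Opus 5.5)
Your proof is correct, but it takes a different route from the paper's, most visibly in ($\Leftarrow$).

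For ($\Rightarrow$), the paper factors $I=\mathfrak{p}_1^{\alpha_1}\cdots\mathfrak{p}_k^{\alpha_k}$ and applies the Chinese remainder theorem. It then observes that the ideals of $R$ containing $\mathfrak{p}^k$ are the powers $\mathfrak{p}^{\alpha}$. You instead use the Artinian decomposition $R/I\cong\prod_{\mathfrak{p}\supset I}R_{\mathfrak{p}}/IR_{\mathfrak{p}}$ together with the fact that each $R_{\mathfrak{p}}$ is a DVR. This is essentially the same argument, seen through localization rather than ideal factorization.

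For ($\Leftarrow$), the paper shows that \emph{all} ideals of $R_{\mathfrak{p}}$ are totally ordered. Given nonzero $I,J$, it picks $k$ with $\mathfrak{p}^kR_{\mathfrak{p}}\subset I\cap J$ (an ideal contains a power of its radical) and transfers the comparison to $R/\mathfrak{p}^k$. It then concludes via ``totally ordered ideals $\Rightarrow$ valuation ring $\Rightarrow$ integrally closed'' and the local nature of integral closedness. You use the hypothesis only for $I=\mathfrak{p}^2$. Comparability of the ideals between $\mathfrak{p}^2$ and $\mathfrak{p}$ forces $\mathfrak{m}/\mathfrak{m}^2$ to be one-dimensional over the residue field. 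The characterization of one-dimensional Noetherian local domains (Atiyah--Macdonald, Proposition 9.2) then makes $R_{\mathfrak{p}}$ a DVR.

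What each route buys:
\begin{itemize}
\item Your argument is shorter and yields a sharper criterion: $R$ is Dedekind as soon as $R/\mathfrak{p}^2$ has linearly ordered ideals for every nonzero prime $\mathfrak{p}$. The price is leaning on Proposition 9.2, where the Noetherian and dimension-one hypotheses do the real work.
\item The paper's argument needs the hypothesis for every power $\mathfrak{p}^k$, but it obtains integral closedness directly from valuation theory.
\end{itemize}

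Your version also fits the paper's remark that three ideals suffice. For a non-Dedekind order and a suitable $\mathfrak{p}$, the preimages of three distinct lines in a two-dimensional subspace of $\mathfrak{p}/\mathfrak{p}^2$ already violate $(Rg)$. For $\mathbb{Z}[2i]$ with $\mathfrak{p}=(2,2i)$, these are exactly the ideals $(2)$, $(2i)$, $(2+2i,4)$ of the paper's example.
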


\begin{proof}
    ($\Rightarrow$) Suppose that $R$ is a Dedekind domain and let $I$ be a nonzero ideal. Then $I = \mathfrak{p}_1^{\alpha_1} \ldots \mathfrak{p}_k^{\alpha_k}$ for some prime ideals $\mathfrak{p}_1, \ldots, \mathfrak{p}_k$, and this representation is unique \cite[Corollary 9.4]{atiyah-commalg}. By the Chinese remainder theorem \cite[Proposition 1.10]{atiyah-commalg}, we have
    \[
    R / I \cong R / \mathfrak{p}_1^{\alpha_1} \oplus \ldots \oplus R / \mathfrak{p}_k^{\alpha_k}.
    \]
    Thus it is enough to show that for any $k$ and any prime ideal $\mathfrak{p}$ the ring $R / \mathfrak{p}^k$ is local with linearly ordered ideals. The ideals of $R / \mathfrak{p}^k$ are in a one-to-one order-preserving correspondence with ideals of $R$ that contain $\mathfrak{p}^k$ \cite[Proposition 1.1]{atiyah-commalg}. Hence they are of the form $\mathfrak{p}^{\alpha}$ and hence $R / \mathfrak{p}^k$ is a local ring with linearly ordered ideals.

    ($\Leftarrow$) 
    We need to show that $R$ is integrally closed, assuming that for any nonzero ideal $I$, the ring $R/I$ is a finite direct product of local rings with linearly ordered ideals.
    By \cite[Proposition 5.13]{atiyah-commalg}, $R$ is integrally closed if and only if the localization $R_{\mathfrak{p}}$ is integrally closed for each nonzero prime ideal $\mathfrak{p}$ in $R$. Let us prove that the ideals of $R_{\mathfrak{p}}$ are totally ordered. This is equivalent to $R_{\mathfrak{p}}$ being a valuation ring \cite[Chapter 5, Exercise 28]{atiyah-commalg}, which
    implies that $R_{\mathfrak{p}}$ is integrally closed \cite[Proposition 5.18]{atiyah-commalg}. 

    Let $I$ and $J$ be ideals in $R_{\mathfrak{p}}$. We want to show that $I \subset J$ or $J \subset I$. We can suppose that $I, J$ are nonzero ideals, otherwise the statement is trivial. The ring $R_{\mathfrak{p}}$ is one-dimensional \cite[Proposition 3.11 iv)]{atiyah-commalg} and hence $\mathfrak{p} R_{\mathfrak{p}}$ is its unique nonzero prime ideal. The radical of $I$ equals the intersection of prime ideals that contain $I$ \cite[Proposition 1.14]{atiyah-commalg} and hence equals $\mathfrak{p}R_{\mathfrak{p}}$. In a Noetherian ring, any ideal contains a power of its radical \cite[Proposition 7.14]{atiyah-commalg}. Hence there is an integer $k$ such that $\mathfrak{p}^k R_{\mathfrak{p}} \subset I \cap J$. 
    Let $K$ be a proper ideal of $R$ that contains $\mathfrak{p}^k$. 
    Then $K$ is contained in $\mathfrak{p}$. Indeed, otherwise there exists $a \in K \setminus \mathfrak{p}$. Then $(a) + \mathfrak{p} = (1)$. Hence $(1) = (a) + \mathfrak{p}^{k} \subset K$, which is a contradiction.
    This shows that the ideals of $R_{\mathfrak{p}} / \mathfrak{p}^k R_{\mathfrak{p}}$ are in one-to-one order-preserving correspondence with the ideals of $R / \mathfrak{p}^k$, which are totally ordered by assumption. Hence $I \subset J$ or $J \subset I$.

\end{proof}

It is easy to see that in integral domain $(Rg)$ is satisfied for any finite set of ideals if and only if $(Rg)$ is satisfied for each quotient ring $R/I$ modulo a nonzero ideal $I$. Any order is a one-dimensional Noetherian domain \cite[Chapter I, Proposition 12.2]{neukirch-algnt}.
Therefore Theorem \ref{Rogers for orders} follows from Theorem \ref{Rogers for finite rings} and Lemma \ref{Dedekind criterion}.

\qed

\end{document}